\newtheorem{theorem}{Theorem}[section]
\newtheorem{lemma}[theorem]{Lemma}
\theoremstyle{definition}
\theoremstyle{remark}
\numberwithin{equation}{section}
\newtheorem{rk}{Remark}[section]
\newtheorem{ap}{Assumption}[section]
\newtheorem{prop}{Proposition}[section]
\newtheorem{lm}{Lemma}[section]
\newcommand{\tr}{\top}
\newcommand{\ee}{\mathbb E}
\newcommand{\pp}{\mathbb P}
\newcommand{\nn}{\mathbb N}
\newcommand{\rr}{\mathbb R}
\newcommand{\BB}{\mathcal B}
\newcommand{\CC}{\mathcal C}
\newcommand{\LL}{\mathcal L}
\newcommand{\PP}{\mathcal P}
\newcommand{\OOO}{\mathscr O}
\newcommand{\FFF}{\mathscr F}
\newcommand{\<}{\langle}
\renewcommand{\>}{\rangle}
\allowdisplaybreaks \allowdisplaybreaks[4]
\newcommand{\norm}[1]{\left\lVert #1 \right\rVert}
\begin{document}

\title[Numerical Ergodicity of SACE driven by Multiplicative White Noise]
{Numerical Ergodicity of Stochastic Allen--Cahn Equation driven by Multiplicative White Noise} 

%    Information for the first author
\author{Zhihui LIU}
\address{Department of Mathematics \& National Center for Applied Mathematics Shenzhen (NCAMS) \& Shenzhen International Center for Mathematics, Southern University of Science and Technology, Shenzhen 518055, China}
% \curraddr{}
\email{liuzh3@sustech.edu.cn}
\iffalse
%    Information for second author
\author{}
%    Address of record for the research reported here
\address{}
%    Current address
% \curraddr{}
\email{}
%    \thanks will become a 1st page footnote.
\fi
\thanks{The author is supported by the National Natural Science Foundation of China, No. 12101296, Basic and Applied Basic Research Foundation of Guangdong Province, No. 2024A1515012348, and Shenzhen Basic Research Special Project (Natural Science Foundation) Basic Research (General Project), No. JCYJ20220530112814033 and JCYJ20240813094919026.} 

%    General info

\subjclass[2010]{Primary 60H35; Secondary 60H15, 65M60}

\keywords{numerical invariant measure,
numerical ergodicity,
stochastic Allen--Cahn equation}

\begin{abstract}
We establish the unique ergodicity of a fully discrete scheme for monotone SPDEs with polynomial growth drift and bounded diffusion coefficients driven by multiplicative white noise.
The main ingredient of our method depends on the satisfaction of a Lyapunov condition followed by a uniform moments' estimate, combined with the regularity property for the full discretization.
We transform the original stochastic equation into an equivalent random equation where the discrete stochastic convolutions are uniformly controlled to derive the desired uniform moments' estimate.
Applying the main result to the stochastic Allen--Cahn equation driven by multiplicative white noise indicates that this full discretization is uniquely ergodic for any interface thickness.
Numerical experiments validate our theoretical results. 
\end{abstract}

\maketitle

\section{Introduction}

The invariant measure and ergodicity, as a significant long-time behavior of Markov processes generated by stochastic ordinary and partial differential equations (SODEs and SPDEs, respectively), characterize the identity of temporal average and spatial average, which has many applications in quantum mechanics, fluid dynamics, financial mathematics, and many other fields \cite{DZ96, HW19}. 
As everyone knows, the explicit expression of the invariant measure for a stochastic nonlinear system is rarely available.
For this reason, it motivated and fascinated a lot of investigations in recent decades for designing numerical algorithms that can inherit the ergodicity of the original system.

There have been some developments in the construction and analysis of numerical algorithms for the invariant measures and ergodic limits of dissipative SODEs, Lipschitz SPDEs, or SPDEs with super-linear growth coefficients driven by trace-class noise. See, e.g., \cite{LMYY18, LMW23, MSH02} and references therein for numerical ergodicity of dissipative SODEs with or without Markovian switching, \cite{Bre14, BV16, CGW20, CHS21, HM06} for approximating the invariant measures of parabolic SPDEs driven by additive noise, \cite{Liu23, LL24} for the unique ergodicity of the drift-implicit Euler Galerkin (DIEG) scheme of monotone SPDEs with polynomial growth coefficients driven by multiplicative trace-class noise.

In the settings of the infinite-dimensional case, we note that most of the existing literature focuses on the numerical ergodicity of Lipschitz SPDEs driven by additive white noise or monotone SPDEs driven by trace-class noise; the case of super-linear SPDEs driven by multiplicative white noise is more subtle and challenging. 
Our main aim is to indicate that the widely studied DIEG scheme (see \eqref{die-g}) applied to second-order monotone SPDEs with polynomial growth coefficients driven by nondegenerate multiplicative white noise is uniquely ergodic (see Theorem \ref{tm-spde}). 
Applying this result to the 1D stochastic Allen--Cahn equation driven by nondegenerate multiplicative white noise indicates that its DIEG scheme is uniquely ergodic for any interface thickness (see Theorem \ref{tm-ac}).

The paper is organized as follows.
Section \ref{sec2} gives the principal assumptions on the considered SPDE.
In this part, we show the unique solvability and required properties of the DIEG scheme.
The Lyapunov structure and regularity property of the DIEG scheme with application to the stochastic Allen--Cahn equation are explored in Section \ref{sec3}. 
The theoretical results are verified by numerical experiments in Section \ref{sec4}.

\section{Preliminaries}
\label{sec2}

This section presents the main assumptions used throughout the paper, the solvability, and the properties needed for the full discretization to be considered. 
We also give some preliminaries on invariant measure and ergodicity of Markov chains.

\subsection{Main Assumptions}

Denote by $\|\cdot\|$ and $\<\cdot, \cdot\>$ blue the norm and inner product, respectively, in $H:=L^2(0, 1)$ equipped with the Borel $\sigma$-algebra $\BB(H)$.      
For $\theta \in [-1, 1]$, we use $(\dot H^\theta=\dot H^\theta(0, 1), \|\cdot\|_\theta)$ to denote the usual Sobolev interpolation spaces; the dual between $\dot H^1$ and $\dot H^{-1}$ is denoted by $_1\<\cdot, \cdot\>_{-1}$. 
We use $\CC_b(H)$ and $(\LL_2^0, \|\cdot\|_{\LL_2^0})$ to denote the space of bounded, continuous functions and Hilbert--Schmidt operators on $H$, respectively.      

Let $W$ be an $H$-valued cylindrical Wiener process on a complete filtered probability space $(\Omega, \FFF, (\FFF(t))_{t\geq 0}, \pp)$, i.e., there exists an orthonormal basis $\{g_k\}_{k=1}^\infty$ of $H$ and a sequence of mutually independent Brownian motions $\{\beta_k\}_{k=1}^\infty$ such that (see \cite[Proposition 2.1.10]{LR15})
\begin{align*}
W(t, \xi)
=\sum_{k\in \nn_+} g_k(\xi) \beta_k(t),
\quad (t, \xi) \in \rr_+ \times (0, 1).
\end{align*}
In the distribution sense, the spatial derivative of $W$ is the so-called (space-time) white noise.   
  
Denote by $\Delta$ the Dirichlet Laplacian operator on $H$.
Then $-\Delta$ possesses a sequence of blue positive eigenvalues $\{\lambda_k\}_{k \in \nn_+}$ in an increasing order corresponding to the eigenvectors $\{e_k\}_{k \in \nn_+}$ which vanish on the boundary of $(0, 1)$, respectively:
\begin{align} \label{Delta}
-\Delta e_k=\lambda_k e_k, \quad k \in \nn_+.
\end{align} 
It is clear that the following Poincar\'e inequality holds (with $\lambda_1=\pi^2$):
\begin{align} \label{poin}
\|\nabla x\|^2 \ge \lambda_1 \|x\|^2, \quad x \in \dot H^1.
\end{align} 
 
Let us consider the following second-order parabolic SPDE driven by an $H$-valued cylindrical Wiener process $W$:
\begin{align}\label{see-fg} 
& {\rm d} X(t, \xi)  
=(\Delta X(t, \xi)+f(X(t, \xi))) {\rm d}t
+g(X(t, \xi)) {\rm d}W(t, \xi), 
\quad (t, \xi) \in \rr_+ \times \OOO, 
\end{align}
under (homogenous) Dirichlet boundary condition (DBC) $X(t, \xi)=0$, $(t, \xi) \in \rr_+ \times \partial \OOO$, with the initial datum $X_0(\xi):=X(0, \xi)$, $\xi \in \OOO$.
Here, $f$ is assumed to be monotone-type with polynomial growth, and $g$ is a continuous, bounded, and invertible function (see Assumptions \ref{ap-f} and  \ref{ap-g}).

It is known that Eq. \eqref{see-fg} driven by white noise possesses a random field solution only in 1D; thus, we restrict our investigation to the 1D physical domain $\OOO=(0, 1)$. 
We note that Eq. \eqref{see-fg} includes the following stochastic Allen--Cahn equation under DBC, arising from phase transition in materials science by stochastic perturbation, as a special case:
\begin{align} \label{ac}
{\rm d} X(t, \xi)=  \Delta X(t, \xi)  {\rm d}t + \epsilon^{-2} (X(t, \xi)-X(t, \xi)^3) {\rm d}t
+ g(X(t, \xi)) {\rm d}W(t, \xi), 
\end{align}
where the positive index $\epsilon \ll 1$ is the interface thickness; see, e.g., \cite{BGJK23, BCH19, Liu23, LL24, LQ20, LQ21} and references therein.

Our main conditions on the coefficients of Eq. \eqref{see-fg} are the following two assumptions.

\begin{ap} \label{ap-f}
There exist scalars $K_i \in \rr$, $i=1,2,3,4,5$, and $q \ge 1$ such that 
\begin{align} 
(f(\xi)-f(\eta)) (\xi-\eta) & \le K_1 (\xi-\eta)^2,
\quad \xi, \eta \in \rr, \label{f-mon} \\
 f(\xi) \xi & \le  K_2 |\xi|^2 + K_3,
\quad \xi \in \rr,  \label{f-coe} \\
 |f(\xi)| &  \le K_4 |\xi|^q+K_5,\quad \xi \in \rr. \label{f-gro}
\end{align}
\end{ap}

\begin{rk}
It is clear that the corresponding function $f(\xi):=\epsilon^{-2} (\xi-\xi^3)$, $\xi \in \rr$, in the stochastic Allen--Cahn equation \eqref{ac} satisfies Assumption \ref{ap-f}; see \cite[Example 2.1]{LQ21}.
\end{rk}

Define the Nemytskii operator $F: \dot H^1 \rightarrow \dot H^{-1}$ associated with $f$ by
\begin{align}  \label{df-F}
F(x)(\xi):=f(x(\xi)), & \quad x \in \dot H^1,\ \xi \in (0, 1).
\end{align} 
Then the monotone condition \eqref{f-mon} and the coercive condition \eqref{f-coe} yield that the operator $F$ defined in \eqref{df-F} satisfies 
\begin{align} 
_{1}\<x-y, F(x)-F(y)\>_{-1} & \le K_1 \|x-y\|^2, \quad x,y \in \dot H^1, \label{F-mon}  \\
_{1}\<x, F(x)\>_{-1} & \le  K_2 \|x\|^2+K_3, \quad x \in \dot H^1. \label{F-coe} 
\end{align} 
 
\begin{ap} \label{ap-g}
$g: \rr \to \rr$ is continuous, invertible, and bounded, i.e., there exists a nonnegative constant $K_6$ such that 
\begin{align}   
0 \neq |g(\xi)| & \le K_6, \quad \xi \in \rr. \label{g-bou} 
\end{align} 
\end{ap}

As in \eqref{df-F}, one can define the Nemytskii operator $G: H \rightarrow \LL_2^0$ associated with $g$ by
\begin{align}   \label{df-G}
G(x) g_k(\xi):=g(x(\xi)) g_k(\xi), & \quad x \in H,~ k \in \nn,~ \xi \in (0, 1). 
\end{align} 
Then Eq. \eqref{see-fg} is equivalent to the following infinite-dimensional stochastic evolution equation:
\begin{align} \label{see}
{\rm d}X(t)=(\Delta X(t)+F(X(t))) {\rm d}t+G(X(t)) {\rm d}W, ~~  t \ge 0;
\quad X(0)=X_0.
\end{align}

\subsection{DIEG scheme and solvability}

To introduce the fully discrete scheme, let $N \in \nn_+$ and $V_N$ be the space spanned by the first $N$-eigenvectors of $\Delta$:
\begin{align*}
V_N:={\rm span}\{e_1, e_2, \cdots, e_N\},
\quad N \in \nn_+.
\end{align*}
Define the spectral Galerkin approximate Laplacian operator $\Delta_N: V_N \rightarrow V_N$ and the generalized orthogonal projection operator $\PP_N: \dot H^{-1} \rightarrow V_N$, respectively, as 
\begin{align*}  
\<\Delta_N u^N, v_N\> & =-\<\nabla u^N, \nabla v_N\>,
\quad u^N, v_N \in V_N,  \\
\<\PP_N u, v_N\> & =_1\<v_N, u\>_{-1},
\quad u \in \dot H^{-1},\ v_N \in V_N. 
\end{align*} 
Then the DIEG scheme of Eq. \eqref{see} is to find a $V_N$-valued discrete process $\{X^N_j:\ j \in \nn\}$ such that
\begin{align}\label{die-g} \tag{DIEG} 
&X^N_{j+1}
=X^N_j+\tau \Delta_N X^N_{j+1}
+\tau \PP_N F(X^N_{j+1})
+\PP_N G(X^N_j) \delta_j W,  
\end{align} 
starting from the $V_N$-valued r.v. $X_0^N$ (usually, one takes $X_0^N=\PP_N X_0$ with $X_0$ being the initial datum of Eq. \eqref{see}), where $\delta_j W=W(t_{j+1})-W(t_j)$,  $j \in \nn$.
This fully discrete scheme and its Galerkin finite element version have been widely studied; see, e.g., \cite{CHS21, Liu23, Liu22, LL24, LQ21}. 

It is clear that the DIEG scheme \eqref{die-g} is equivalent to the scheme
\begin{align}\label{full+}
X^N_{j+1}=S_{N,\tau} X^N_j+\tau S_{N,\tau} \PP_N F(X^N_{j+1})
+ S_{N,\tau} \PP_N G(X^N_j) \delta_j W,
\quad j \in \nn,
\end{align}
where $S_{N,\tau}:=({\rm Id}-\tau \Delta_N)^{-1}$ is a space-time approximation of the continuous semigroup $\{S(t)=e^{\Delta t}: t \ge 0\}$ in one step.
Here and in what follows, ${\rm Id}$ denotes the identity operator in $V_N$.
Iterating \eqref{full+} for $m$-times, we obtain 
\begin{align}\label{full-sum}
X^N_j
=S_{N, \tau}^j X^N_0+\tau \sum_{i=0}^{j-1} S_{N, \tau}^{j-i} \PP_N F(X^N_{i+1})
+\sum_{i=0}^{j-1} S_{N, \tau}^{j-i} \PP_N G(X^N_i) \delta_i W, \quad j \in \nn_+.
\end{align}

To investigate the solvability of the DIEG scheme \eqref{die-g}, we need to consider the implicit operator $\hat F: V_N \to V_N$ defined by
  \begin{align} \label{hatF}
  \hat F(x)=({\rm Id}- \tau \Delta_N) x- \tau \PP_N F(x), \quad x \in V_N. 
  \end{align} 
  Then \eqref{die-g} becomes
  \begin{align} \label{hatF+}
      \hat F(X^N_{j+1}) = X^N_j + \PP_N G(X^N_j) \delta_j W, \quad j \in \nn.
  \end{align}

\begin{lemma} \label{open-dieg}
Under the condition \eqref{f-mon} with $(K_1-\lambda_1) \tau<1$, $\hat F: V_N \to V_N$ defined in \eqref{hatF} is bijective so that the DIEG scheme \eqref{die-g} can be uniquely solved pathwise.
  Morevoer, $\hat F$ is an open map, i.e. for each open set $A\in \BB(V_N)$, $\hat F(A)$ is also an open set in $\BB(V_N)$. 
\end{lemma}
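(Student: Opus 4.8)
The plan is to reduce everything to a single strong monotonicity estimate for $\hat F$ on the finite-dimensional space $V_N$, and then to extract injectivity, surjectivity, and openness from that estimate together with elementary facts about continuous maps on $\rr^N$.

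First I would compute, for $x, y \in V_N$, the pairing $\<\hat F(x) - \hat F(y), x - y\>$ by splitting $\hat F$ into its linear part ${\rm Id} - \tau \Delta_N$ and its nonlinear part $-\tau \PP_N F$. The defining property of $\Delta_N$ gives $\<({\rm Id} - \tau \Delta_N)(x-y), x-y\> = \|x-y\|^2 + \tau \|\nabla(x-y)\|^2$, which by the Poincar\'e inequality \eqref{poin} is at least $(1 + \tau \lambda_1)\|x-y\|^2$. For the nonlinear part, since $x - y \in V_N$, the definition of $\PP_N$ converts $\<\PP_N(F(x) - F(y)), x-y\>$ into the duality pairing $_1\<x-y, F(x)-F(y)\>_{-1}$, which by the monotone estimate \eqref{F-mon} is bounded above by $K_1 \|x-y\|^2$. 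Combining the two gives
\begin{align*}
\<\hat F(x) - \hat F(y), x-y\> \ge \bigl(1 - (K_1 - \lambda_1)\tau\bigr)\|x-y\|^2 =: c\,\|x-y\|^2,
\end{align*}
where $c > 0$ precisely because $(K_1-\lambda_1)\tau < 1$. The routine consequences then follow at once: Cauchy--Schwarz on the left-hand side yields the two-sided bound $\|x-y\| \le c^{-1}\|\hat F(x) - \hat F(y)\|$, which gives injectivity and shows that any inverse of $\hat F$ is Lipschitz. Continuity of $\hat F$ is inherited from that of $f$, since on the finite-dimensional space $V_N$ all norms are equivalent, so convergence in $V_N$ forces uniform convergence of the representing functions and hence continuity of $\PP_N F$. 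Taking $y = 0$ also produces coercivity: $\|\hat F(x)\| \ge c\|x\| - \|\hat F(0)\| \to \infty$ as $\|x\| \to \infty$.

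The one step that does real work is surjectivity, and I would argue it topologically. The coercivity just noted makes $\hat F$ proper (preimages of bounded sets are bounded, hence, with continuity, preimages of compacta are compact), so $\hat F$ is a closed map and $\hat F(V_N)$ is closed; on the other hand, $\hat F$ being continuous and injective, Brouwer's invariance of domain shows $\hat F(V_N)$ is open. Since $V_N \cong \rr^N$ is connected and $\hat F(V_N)$ is a nonempty set that is both open and closed, it equals all of $V_N$, which is surjectivity. (Equivalently, one may cite the Browder--Minty theorem for continuous coercive monotone operators on $\rr^N$.) Invariance of domain is the ingredient I expect to carry the main subtlety; everything else is bookkeeping around the monotonicity constant $c$.

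Finally, openness is immediate: $\hat F$ is now a continuous bijection with Lipschitz inverse, hence a homeomorphism, so for any open $A \in \BB(V_N)$ the image $\hat F(A) = (\hat F^{-1})^{-1}(A)$ is open by continuity of $\hat F^{-1}$ (or, directly, invariance of domain already delivers openness). Pathwise unique solvability of \eqref{die-g} then follows by solving \eqref{hatF+} for $X^N_{j+1}$ via the inverse of $\hat F$.
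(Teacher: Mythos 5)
Your proof is correct, and it hinges on exactly the same key estimate as the paper's: the strong monotonicity bound $\<\hat F(x)-\hat F(y),\,x-y\> \ge (1-(K_1-\lambda_1)\tau)\|x-y\|^2$ obtained from the definition of $\Delta_N$, the Poincar\'e inequality \eqref{poin}, and \eqref{F-mon}, followed by the Cauchy--Schwarz consequence $\|\hat F(x)-\hat F(y)\|\ge C_0\|x-y\|$. Where you diverge is in how bijectivity is closed out: the paper simply cites a standard result (strictly monotone operators on a finite-dimensional Hilbert space are invertible, \cite[Theorem C.2]{SH96}), whereas you prove surjectivity from scratch via coercivity, properness (so the image is closed), Brouwer's invariance of domain (so the image is open), and connectedness of $V_N$. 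Your route is self-contained modulo invariance of domain, which is itself a nontrivial topological theorem, so the net gain in elementarity is debatable; the paper's citation is shorter but opaque. For openness the two arguments are essentially the same fact in different clothing: the paper's ball inclusion $B(\hat F(x),r)\subset \hat F(B(x,r/C_0))$ is precisely the statement that $\hat F^{-1}$ is Lipschitz with constant $C_0^{-1}$, which is your ``homeomorphism'' step (note that the paper's inclusion also silently uses surjectivity, just as yours does). One small caveat common to both arguments: continuity of $\hat F$ requires continuity of $f$, which is not literally contained in Assumption \ref{ap-f} (the one-sided Lipschitz condition \eqref{f-mon} only forces $\xi\mapsto f(\xi)-K_1\xi$ to be non-increasing, which permits jumps); you at least flag continuity explicitly, while the paper's cited theorem needs it implicitly. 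This is harmless for the intended applications such as \eqref{ac}, where $f$ is a polynomial.
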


\begin{proof} 
It follows from the one-sided Lipschitz condition \eqref{F-mon} and the Poincar\'e inequality \eqref{poin} that 
  \begin{align*}
      \<x-y, \hat F(x)-\hat F(y)\> 
      &=  \<x-y, ({\rm Id}-\tau  \Delta_N)(x-y)- \tau \PP_N (F(x)-F(y))\> \\
      &= \norm{x-y}^2 + \tau \norm{\nabla(x-y)}^2 -\tau ~ _{1}\<x-y,F(x)-F(y)\>_{-1} \\
      & \geq (1-(K_1-\lambda_1) \tau )\norm{x-y}^2:=C_0 \norm{x-y}^2, 
      \quad \forall~ x,y \in V_N.
  \end{align*}
As $(K_1-\lambda_1) \tau <1$, $\hat F$ defined in \eqref{hatF} is strictly monotone in the finite-dimensional Hilbert space $V_N$ and thus invertible (see, e.g., \cite[Theorem C.2]{SH96}), so that \eqref{die-g} is uniquely solved pathwise.  

It remains to show that $\hat F$ is an open map.
From the above strict monotonicity and the fact that 
  \begin{align*}
    \| \hat F(x) - \hat F(y)\| \cdot \|x-y\| \geq \<x-y, \hat F(x)-\hat F(y)\>,  \quad \forall~x,y \in V_N,
  \end{align*}
  we obtain 
  \begin{align} \label{hatF-}
    \| \hat F(x) - \hat F(y)\| \geq C_0 \|x-y\|,   \quad \forall~x,y \in V_N.
  \end{align}
  This shows that
  \begin{align}\label{ball inclusion}
    B( \hat F(x), r) \subset \hat F(B(x, r/C_0)), \quad \forall ~x \in V_N, ~r>0.
  \end{align}
%  Indeed,  if $\hat F(y)\in B_{\dot H^{-1}}( \hat F(x), r)$, then we must have $y\in B_N(x, r/C)$ so $\hat F(y)\in \hat F(B_N(x, r/C))$. 
Let us fix an open set $A\in \BB(V_N)$. Then, for each point $x \in A$, there exists an open ball $B(x, r_0/C_0) \subset A$ with $r_0>0$.
  Due to the inclusion \eqref{ball inclusion}, we have $B(\hat F(x), r_0)\subset \hat F(B(x, r_0/C_0)) \subset \hat F(A)$, which shows that $\hat F(A)$ is an open map. 
\end{proof}

\subsection{Preliminaries on Ergodicity of Markov Chains}
\label{sec2.2}

Denote by $P: V_N \times \BB(V_N) \to [0, 1]$ the transition kernel of the Markov chain $\{X_n^N: n \in \nn\}$ generated by \eqref{die-g}, i.e.,
\begin{align} \label{markov}
P(x,A)=\pp(X_{n+1}^N \in A\mid X_n^N=x), \quad x \in V_N, ~ A \in \BB(V_N).
      \end{align}
We also use $P_n$, $n \in \nn_+$, to denote the corresponding Markov semigroup on $\BB(V_N)$:
\begin{align*}
P_n \phi(x):=\ee [\phi(X_n^N)| X_0^N=x], \quad x \in V_N, ~ \phi \in \BB_b(V_N).
      \end{align*} 
      
A probability measure $\mu$ on $V_N$ is called invariant for the Markov chain $\{X_n^N: n \in \nn\}$ or its transition kernel $P$, if 
\begin{align*}
\int_{V_N} P \phi(x) \mu({\rm d}x)
=\mu(\phi):=\int_{V_N} \phi(x) \mu({\rm d}x),
\quad \forall~ \phi \in \CC_b(H).
\end{align*} 
This is equivalent to $\int_N P(x, A) \mu({\rm d}x)=\mu(A)$ for all $A \in \BB(H).$
An invariant (probability) measure $\mu$ is called ergodic for $\{X_n^N: n \in \nn\}$ or $P$, if 
\begin{align} \label{df-erg}
\lim_{m \to \infty} \frac1m \sum_{n=0}^m P_n \phi(x) 
=\mu(\phi) 
\quad \text{in}~ L^2(V_N; \mu),\quad \forall~ \phi \in L^2(V_N; \mu).
\end{align}
It is well-known that if $\{X_n^N: n \in \nn\}$ admits a unique invariant measure, then it is ergodic; in this case, we call it uniquely ergodic.

    \section{Unique Ergodicity of DIEG}
\label{sec3}

This section will show the unique ergodicity of the DIEG scheme \eqref{die-g} and then apply it to the stochastic Allen--Cahn equation \eqref{ac} driven by nondegenerate multiplicative white noise.

\subsection{Lyapunov structure of DIEG}

We begin with the Lyapunov structure of the DIEG scheme \eqref{die-g}.
To this end, we need the following uniform estimation for the sum $(W^N_j)_{j \in \nn_+}$ of discrete stochastic convolutions in \eqref{full-sum}, where we define 
\begin{align}\label{ou}
W^N_j:=\sum_{i=0}^{j-1} S_{N, \tau}^{j-i} \PP_N G(X^N_i) \delta_i W,
\quad j \in \nn_+.
\end{align} 

 \begin{lm} \label{lm-ou}
      Let \eqref{f-mon} and \eqref{g-bou} hold with $(K_1-\lambda_1) \tau<1$.
Then for any $p \ge 2$, $N \in \nn_+$, $\tau \in (0, 1)$ with $(K_1-\lambda_1) \tau <1$, and $\beta \in [0, 1/2)$, there exists a positive constant $C=C(p, \beta, K_6)$ such that  
\begin{align}\label{err-ou}
\sup_{N \in \nn_+} \sup_{j \in \nn_+} \ee \|W^N_j\|_\beta^p \le C.
      \end{align} 
  \end{lm}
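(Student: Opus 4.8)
The plan is to exploit the conditionally Gaussian martingale structure of $W^N_j$ and to reduce the $p$-th moment to an explicitly summable deterministic series via the spectral decomposition of $S_{N,\tau}$. Fix $j \in \nn_+$ and set $d_i := S_{N, \tau}^{j-i} \PP_N G(X^N_i) \delta_i W$ for $0 \le i \le j-1$, so that $W^N_j = \sum_{i=0}^{j-1} d_i$. Since $X^N_i$ is $\FFF(t_i)$-measurable while the increment $\delta_i W$ is independent of $\FFF(t_i)$ with covariance $\tau\,{\rm Id}$, the partial sums $\sum_{i=0}^{n-1} d_i$ form an $(\FFF(t_n))$-martingale whose increments are, conditionally on $\FFF(t_i)$, centered Gaussian in the finite-dimensional space $V_N \subset \dot H^\beta$. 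First I would apply the discrete Burkholder--Davis--Gundy inequality in the Hilbert space $\dot H^\beta$ together with Minkowski's inequality in $L^{p/2}(\Omega)$ (legitimate since $p/2 \ge 1$) to obtain
\begin{align*}
\Big(\ee \|W^N_j\|_\beta^p\Big)^{2/p}
\le C_p \sum_{i=0}^{j-1} \Big(\ee \|d_i\|_\beta^p\Big)^{2/p}.
\end{align*}
This replaces the global $p$-th moment by a sum of the individual increments' $p$-th moments, which is the decisive reduction.

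The next step is to control each increment. Conditioning on $\FFF(t_i)$ and using the Gaussian moment equivalence $\ee[\|d_i\|_\beta^p \mid \FFF(t_i)] \le C_p \big(\ee[\|d_i\|_\beta^2 \mid \FFF(t_i)]\big)^{p/2}$, it suffices to compute the conditional second moment. By the discrete It\^o isometry this equals $\tau \|S_{N,\tau}^{j-i} \PP_N G(X^N_i)\|_{\LL_2(H, \dot H^\beta)}^2$, and expanding in the eigenbasis $\{e_l\}$, on which $S_{N,\tau}^{m} e_l = (1+\tau\lambda_l)^{-m} e_l$ and $\|x\|_\beta^2 = \sum_l \lambda_l^\beta \<x, e_l\>^2$, while $G(X^N_i)$ acts as multiplication by $g(X^N_i(\cdot))$, I would get
\begin{align*}
\ee[\|d_i\|_\beta^2 \mid \FFF(t_i)]
= \tau \sum_{l=1}^N \lambda_l^\beta (1+\tau\lambda_l)^{-2(j-i)} \int_0^1 |g(X^N_i(\xi))|^2 e_l(\xi)^2 \,{\rm d}\xi.
\end{align*}
The boundedness $|g| \le K_6$ in \eqref{g-bou} now removes all dependence on the random state $X^N_i$, yielding the deterministic pathwise bound $\ee[\|d_i\|_\beta^2 \mid \FFF(t_i)] \le \tau K_6^2 \sum_{l=1}^N \lambda_l^\beta (1+\tau\lambda_l)^{-2(j-i)}$; this use of the bounded diffusion coefficient is exactly where Assumption \ref{ap-g} is essential.

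Finally I would sum over $i$. Substituting $m = j-i$ and bounding the geometric series $\sum_{m \ge 1}(1+\tau\lambda_l)^{-2m} = \big((1+\tau\lambda_l)^2-1\big)^{-1} \le (2\tau\lambda_l)^{-1}$, the factor $\tau$ cancels and I arrive at
\begin{align*}
\sum_{i=0}^{j-1} \Big(\ee \|d_i\|_\beta^p\Big)^{2/p}
\le C_p K_6^2 \sum_{l=1}^N \lambda_l^{\beta-1}
\le C_p K_6^2 \sum_{l=1}^\infty \lambda_l^{\beta-1}.
\end{align*}
Since $\lambda_l = \lambda_1 l^2$, the series $\sum_l \lambda_l^{\beta-1} \asymp \sum_l l^{2\beta-2}$ converges precisely because $\beta < 1/2$, giving a bound independent of $N$, $j$, and $\tau$; raising to the power $p/2$ yields \eqref{err-ou}.

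I expect the main obstacle to be securing uniformity simultaneously in all three parameters $N$, $j$, and $\tau$: uniformity in $j$ hinges on the summability of the geometric series (whose sum $(2\tau\lambda_l)^{-1}$ cancels the prefactor $\tau$), while uniformity in $N$ and the very finiteness of the limiting series force the restriction $\beta < 1/2$, which encodes the optimal spatial regularity of the white noise in one dimension. The remaining care is purely bookkeeping: justifying the conditional Gaussian computation via the adaptedness of $X^N_i$ and independence of the increments $\delta_i W$, and checking that the Burkholder--Davis--Gundy constant $C_p$ is independent of $N$ since it depends only on $p$ and the Hilbert-space geometry of $\dot H^\beta$.
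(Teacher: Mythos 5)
Your proposal is correct and follows essentially the same route as the paper: martingale decomposition of $W^N_j$, discrete Burkholder--Davis--Gundy plus Minkowski to reduce to the increments, eigenbasis expansion of $S_{N,\tau}$, the bound $|g|\le K_6$ to kill the state dependence, and the geometric-series summation whose $(2\tau\lambda_l)^{-1}$ cancels $\tau$ and leaves $\sum_l \lambda_l^{\beta-1}<\infty$ for $\beta<1/2$. The only (immaterial) difference is that you control each increment via conditional Gaussian moment equivalence, while the paper invokes the continuous BDG inequality for the one-step stochastic integral; both yield the same $\LL_2^0$-norm bound.
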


  \begin{proof}
  Denote by $Z_i:=(-\Delta)^{\beta/2}S_{N, \tau}^{j-i} \PP_N G(X^N_i) \delta_i W$, $i \in \nn$. For any $i \in \nn$, it is clear that $G(X^N_i)$ is $\FFF_{t_i}$-measurable and that $\delta_i W$ is $\FFF_{t_{i+1}}$-measurable and independent of $\FFF_{t_i}$.
  Consequently, $\{Y_j:=\sum_{i=0}^{j-1} Z_i, j \in \nn_+; ~ Y_0:=0\}$ is a ($V_N$-valued) $(\FFF_{t_j})_{j \in \nn_+}$-discrete martingale, so that $\{Z_i=Y_{i+1}-Y_i: i \in \nn\}$ is an $(\FFF_{t_j})_{j \in \nn_+}$-martingale difference.

Using the discrete Burkholder--Davis--Gundy (BDG) inequality (see, e.g., \cite[Lemma 2.2]{LQ21}), we obtain 
  \begin{align*} 
\ee \|W^N_j\|_\beta^p 
& =\ee \|\sum_{i=0}^{j-1} Z_i\|^p
\le C (\sum_{i=0}^{j-1} \|Z_i\|^2_{L^p_\omega L_x^2})^{p/2} \\
& = C \Big(\sum_{i=0}^{j-1} \Big\|\int_{t_i}^{t_{i+1}}(-\Delta)^{\beta/2}S_{N, \tau}^{j-i} \PP_N G(X^N_i) dW_t\Big\|^2_{L^p_\omega L_x^2} \Big)^{p/2}.
\end{align*}  
Here and in the rest of the paper, $C$ denotes a universally positive constant that would differ in each appearance, and $L^p_\omega$ and $L_x^2$ denote the usual $L^p$- or $L^2$-norm in $\Omega$ and $(0, 1)$, respectively.
Then, we use the continuous BDG inequality, the definition of the $\LL_2^0$-norm, the relation $\eqref{Delta}$, and the properties of $S_{N, \tau}$ and $\PP_N$ to get 
\begin{align*} 
\ee \|W^N_j\|_\beta^p 
& \le C \tau^{p/2} \Big(\sum_{i=0}^{j-1}\|(-\Delta)^{\beta/2}S_{N, \tau}^{j-i} \PP_N G(X^N_i)\|^2_{L^p_\omega \LL_2^0} \Big)^{p/2} \\
& = C \tau^{p/2} \Big(\sum_{i=0}^{j-1} \Big\|\sum_{m, n} \<(-\Delta)^{\beta/2}S_{N, \tau}^{j-i} \PP_N G(X^N_i) e_m, e_n\>^2\Big\|_{L^{p/2}_\omega} \Big)^{p/2} \\ 
& = C \tau^{p/2} \Big(\sum_{i=0}^{j-1} \Big\|\sum_n \lambda_n^\beta (1+\tau \lambda_n)^{-2(j-i)} \sum_m  \<G(X^N_i) e_m, e_n\>^2\Big\|_{L^{p/2}_\omega} \Big)^{p/2} \\
& = C \tau^{p/2} \Big(\sum_{i=0}^{j-1} \Big\|\sum_n \lambda_n^\beta (1+\tau \lambda_n)^{-2(j-i)} \|G(X^N_i) e_n\|^2 \Big\|_{L^{p/2}_\omega} \Big)^{p/2}.
\end{align*}  
It follows from the condition \eqref{g-bou} that  
\begin{align*} 
\ee \|W^N_j\|_\beta^p 
& \le C \tau^{p/2} \Big(\sum_n \lambda_n^\beta \sum_{i=0}^{j-1} (1+\tau \lambda_n)^{-2(j-i)} \Big)^{p/2}  
\le C \Big(\sum_n \lambda_n^{\beta-1} \Big)^{p/2}, 
\end{align*}  
which is finite as $\beta<1/2$, where in the last inequality, we have used the elementary identity
$\sum_{i=0}^{j-1} (1+\tau \lambda_n)^{-2(j-i)}=[\tau \lambda_n(\lambda_n+2)]^{-1}$.
  \end{proof}
  
  Now, we can develop the following Lyapunov structure for the DIEG scheme \eqref{die-g}.

 \begin{prop} \label{prop-lya}
      Let Assumption \ref{ap-f} and condition \eqref{g-bou} hold with $K_2<\lambda_1$.
Then for any $N \in \nn_+$, $\tau \in (0, 1)$ with $(K_1-\lambda_1) \tau <1$, and $\FFF_0$-measurable $X_0^N \in L^2(\Omega; V_N)$, there exist positive constants $\gamma$ and $C_\gamma$ such that
      \begin{align}\label{lya}
& \ee \|X_j^N\|^2 \le e^{-\gamma t_j} \ee \|X_0^N\|^2+C_\gamma, \quad j \in \nn.
      \end{align} 
  \end{prop}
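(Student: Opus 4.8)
The plan is to eliminate the stochastic increments from the scheme by the substitution $Y^N_j:=X^N_j-W^N_j$, with $W^N_j$ the discrete stochastic convolution \eqref{ou}, so that $Y^N_j$ solves a purely pathwise recursion amenable to a deterministic energy estimate, and then to insert the uniform-in-$N$ control of $W^N_j$ furnished by Lemma \ref{lm-ou}. First I would record the one-step recursion $W^N_{j+1}=S_{N,\tau}W^N_j+S_{N,\tau}\PP_N G(X^N_j)\delta_j W$, immediate from \eqref{ou}; subtracting it from \eqref{full+} the noise terms cancel and leave $Y^N_{j+1}=S_{N,\tau}Y^N_j+\tau S_{N,\tau}\PP_N F(X^N_{j+1})$. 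Applying $S_{N,\tau}^{-1}={\rm Id}-\tau\Delta_N$ gives the implicit, pathwise identity
\begin{align*}
Y^N_{j+1}=Y^N_j+\tau\Delta_N Y^N_{j+1}+\tau\PP_N F(X^N_{j+1}),\qquad X^N_{j+1}=Y^N_{j+1}+W^N_{j+1},
\end{align*}
with $Y^N_0=X^N_0$ because $W^N_0=0$.

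Next I would test this identity against $Y^N_{j+1}$ in $H$. Polarization converts the left-hand side into $\tfrac12(\|Y^N_{j+1}\|^2-\|Y^N_j\|^2)+\tfrac12\|Y^N_{j+1}-Y^N_j\|^2$, the Laplacian yields $-\tau\|\nabla Y^N_{j+1}\|^2\le-\tau\lambda_1\|Y^N_{j+1}\|^2$ by the Poincar\'e inequality \eqref{poin}, and the drift yields $\tau\,{}_1\<Y^N_{j+1},F(X^N_{j+1})\>_{-1}$. Writing $Y^N_{j+1}=X^N_{j+1}-W^N_{j+1}$ I would split the last pairing into the diagonal term ${}_1\<X^N_{j+1},F(X^N_{j+1})\>_{-1}$ and the cross term $-\<W^N_{j+1},F(X^N_{j+1})\>$. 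The diagonal term is bounded by the coercivity \eqref{F-coe} by $K_2\|X^N_{j+1}\|^2+K_3$, and after $\|X^N_{j+1}\|^2\le(1+\eta)\|Y^N_{j+1}\|^2+(1+\eta^{-1})\|W^N_{j+1}\|^2$ the factor $\eta$ is chosen so small that $K_2(1+\eta)<\lambda_1$ (possible since $K_2<\lambda_1$), making the induced $\|Y^N_{j+1}\|^2$ strictly dominated by the linear dissipation.

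The hard part will be the cross term $\<W^N_{j+1},F(X^N_{j+1})\>$, which by the growth bound \eqref{f-gro} is of the size $\int_0^1|W^N_{j+1}|(K_4|X^N_{j+1}|^q+K_5)\,\dd\xi$; for super-linear $q$ (e.g.\ $q=3$ for Allen--Cahn) this is not absorbable by the quadratic dissipation alone. Here I would retain the genuine super-linear damping that \eqref{f-mon}--\eqref{f-gro} force on the diagonal pairing (for the Allen--Cahn drift, ${}_1\<X,F(X)\>_{-1}=\epsilon^{-2}\|X\|^2-\epsilon^{-2}\|X\|_{L^4}^4$), keep a term $-c\|X^N_{j+1}\|_{L^{q+1}}^{q+1}$, and dominate the cross term by $\tfrac{c}{2}\|X^N_{j+1}\|_{L^{q+1}}^{q+1}+C(\|W^N_{j+1}\|_\beta^{r}+1)$ via H\"older, Young, and the one-dimensional Sobolev embedding $\dot H^\beta\hookrightarrow L^{p}$ (valid for every finite $p$ once $\beta<1/2$ is taken close enough to $1/2$), for a suitable exponent $r$. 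Since the equation is now pathwise, taking expectations produces a recursion $\ee\|Y^N_{j+1}\|^2\le\rho\,\ee\|Y^N_j\|^2+\tau C$ with $\rho=(1+2\tau(\lambda_1-K_2(1+\eta)))^{-1}<1$, where the constant is finite and $N$-independent precisely because $\sup_{N,j}\ee\|W^N_j\|_\beta^{r}<\infty$ by Lemma \ref{lm-ou}. Iterating, writing $\rho^j=e^{-\gamma t_j}$ with $\gamma=-\tau^{-1}\log\rho>0$, and returning to $X^N_j$ through $\|X^N_j\|^2\le2\|Y^N_j\|^2+2\|W^N_j\|^2$ together with $Y^N_0=X^N_0$ and $\sup_{N,j}\ee\|W^N_j\|^2<\infty$ then yields \eqref{lya} after adjusting the constants.
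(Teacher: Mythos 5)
Your overall strategy coincides with the paper's: the same stochastic--random transformation $Y^N_j:=X^N_j-W^N_j$, the same pathwise recursion $Y^N_{j+1}=Y^N_j+\tau\Delta_N Y^N_{j+1}+\tau\PP_N F(Y^N_{j+1}+W^N_{j+1})$, an energy estimate obtained by testing with $Y^N_{j+1}$, and Lemma \ref{lm-ou} to close the resulting recursion uniformly in $N$ and $j$. The divergence --- and the gap --- lies in how you split the pairing $\tau\,{}_1\<Y^N_{j+1},F(X^N_{j+1})\>_{-1}$. You write it as ${}_1\<X^N_{j+1},F(X^N_{j+1})\>_{-1}-\<W^N_{j+1},F(X^N_{j+1})\>$, apply the coercivity \eqref{F-coe} to the diagonal piece, and then must absorb the cross term $\<W^N_{j+1},F(X^N_{j+1})\>$, for which you invoke a super-linear damping term $-c\|X^N_{j+1}\|_{L^{q+1}}^{q+1}$. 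No such term is supplied by Assumption \ref{ap-f}: \eqref{f-coe} is only the one-sided quadratic bound $f(\xi)\xi\le K_2|\xi|^2+K_3$, and \eqref{f-mon}, \eqref{f-gro} are upper bounds that do not furnish a $-c|\xi|^{q+1}$ term with a constant controlled by $K_1,\dots,K_5$ (for instance $q$ in \eqref{f-gro} is merely an upper growth exponent and may be vacuously large). Your claim that the assumptions ``force'' this damping is the hardest step of your argument and is left unsubstantiated; as written, the proposal proves the proposition only for drifts with an explicit $L^{q+1}$ dissipation, such as the Allen--Cahn nonlinearity, not under Assumption \ref{ap-f} as stated.

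The paper avoids this cross term entirely by splitting the other way: $F(X^N_{j+1})=\bigl(F(X^N_{j+1})-F(W^N_{j+1})\bigr)+F(W^N_{j+1})$. The first piece pairs with $Y^N_{j+1}=X^N_{j+1}-W^N_{j+1}$ exactly in the monotonicity form \eqref{F-mon} and yields a multiple of $\|Y^N_{j+1}\|^2$; the second is estimated by duality, $\<Y^N_{j+1},F(W^N_{j+1})\>\le\varepsilon\|\nabla Y^N_{j+1}\|^2+C_\varepsilon\|F(W^N_{j+1})\|_{-1}^2$, so that the polynomial growth \eqref{f-gro} only ever acts on $W^N_{j+1}$, whose $\dot H^\beta$-moments (and hence, via $L^1\subset\dot H^{-1}$ and the Sobolev embedding, the moments of $\|F(W^N_{j+1})\|_{-1}$) are uniformly bounded by Lemma \ref{lm-ou}; the gradient term is absorbed into the dissipation. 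This is the step you should adopt. I note in passing that the monotonicity splitting naturally produces the constant $K_1$ rather than the $K_2$ appearing in the paper's displayed estimate and in the hypothesis $K_2<\lambda_1$, whereas your diagonal splitting is the one that genuinely targets $K_2$ --- which is precisely why it runs into the uncontrolled cross term. Reconciling these two constants deserves attention, but either way your treatment of $\<W^N_{j+1},F(X^N_{j+1})\>$ is not justified by the stated hypotheses.
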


\begin{proof}  
For $j \in \nn$, set $Y^N_j:=X^N_j-W^N_j$, where $W^N_j$ is defined in \eqref{ou}.
Then from \eqref{full-sum} we have 
\begin{align*}
Y^N_{j+1}
=S_{N, \tau}^{j+1} X^N_0+\tau \sum_{i=0}^j S_{N, \tau}^{j+1-i} \PP_N F(Y^N_{i+1}+W^N_{i+1}).
\end{align*}
It is clear that 
\begin{align}\label{y} 
Y^N_{j+1}
=Y^N_j+\tau \Delta_N Y^N_{j+1}
+\tau \PP_N F(Y^N_{j+1}+W^N_{j+1}).  
\end{align} 

Testing \eqref{y} with $Y^N_{j+1}$ under the $\<\cdot, \cdot\>$-inner product, using the elementary equality 
\begin{align*}
2 \<x-y, x\> =\|x \|^2- \|y\|^2+\|x-y\|^2,
\quad x, y \in V_N,
\end{align*}  
and integration by parts formula, we have  
\begin{align*}
& \|Y^N_{j+1}\|^2 - \|Y^N_j\|^2 +  \|Y^N_{j+1}-Y^N_j\|^2
+ 2 \tau \|\nabla Y^N_{j+1}\|^2   \\
& =2 ~ \<Y^N_{j+1}, F(X^N_{j+1})-F(W^N_{j+1})\> \tau  
 + 2 \<Y^N_{j+1}, F(W^N_{j+1})\> \tau.
\end{align*}   
Using the condition \eqref{F-coe} and Cauchy--Schwarz inequality leads to
\begin{align*}
& \|Y^N_{j+1}\|^2 - \|Y^N_j\|^2 + 2 \tau \|\nabla Y^N_{j+1}\|^2  \\
& \le 2 K_2 \tau \|Y^N_{j+1}\|^2  
 + 2 \tau~ \<Y^N_{j+1}, F(W^N_{j+1})\>\\
 & \le 2 K_2 \tau \|Y^N_{j+1}\|^2  + 2 \varepsilon \tau \|\nabla Y^N_{j+1}\|^2
 + C_\varepsilon \tau \|F(W^N_{j+1})\|_{-1}^2,
\end{align*}   
for any positive $\varepsilon$.
Taking the expectation on both sides and using the Poincar\'e inequality \eqref{poin}, we obtain  
\begin{align*}  
\ee \|Y^N_{j+1}\|^2  
& \le \frac1{1+[(2-\varepsilon) \lambda_1-2 K_2] \tau} \ee \|Y^N_j\|^2 \\
& \quad + \frac{C_\varepsilon \tau}{1+[(2-\varepsilon) \lambda_1-2 K_2] \tau} 
\ee \|F(W^N_{j+1})\|_{-1}^2.
\end{align*}   
By the embeddings $L^1 \subset \dot H^{-1}$ and $\dot H^\beta \subset \dot L^q$ for sufficiently large $\beta<1/2$, the growth condition \eqref{f-gro}, and the estimation \eqref{err-ou}, we have 
\begin{align*} 
\ee[\|F(W^N_j)\|_{-1}^2]  
& \le C \ee[\|F(W^N_j)\|_{L^1}^2] 
\le C (1+\ee[ \||W^N_j\|_{L^q}^{2q}]) \\
& \le C (1+\ee[ \||W^N_j\|_\beta^{2q}]) \le C.
\end{align*}  
Combining the above two estimates leads to
\begin{align*} 
& \ee \|Y^N_{j+1}\|^2  
\le \frac1{1+[(2-\varepsilon) \lambda_1-2 K_2] \tau} \ee \|Y^N_j\|^2
+ \frac{C_\varepsilon \tau}{1+[(2-\varepsilon) \lambda_1-2 K_2] \tau}.
\end{align*} 
from which we obtain 
\begin{align*} 
\ee \|Y^N_j\|^2  
& \le \Big(\frac1{1+[(2-\varepsilon) \lambda_1-2 K_2] \tau}\Big)^j \ee \|X_0\|^2 \\
& \quad + \frac{C_\varepsilon \tau}{1+[(2-\varepsilon) \lambda_1-2 K_2] \tau}
\sum_{i=0}^{j-1} \Big(\frac1{1+[(2-\varepsilon) \lambda_1-2 K_2] \tau}\Big)^i.
\end{align*}         
Note that 
$a^k < e^{-(1-a) k}$ for any $a \in (0, 1)$ 
and that $\frac1{1+[(2-\varepsilon) \lambda_1-2 K_2] \tau}<1$ for any $\tau \in (0, 1)$, which is ensured by the condition $K_2<\lambda_1$ and the fact that $\varepsilon$ can be taken as an arbitrary small positive constant, we conclude \eqref{lya} with 
$\gamma:=\frac{(2-\varepsilon) \lambda_1-2 K_2}{1+[(2-\varepsilon) \lambda_1-2 K_2] \tau}$ and 
$C_\gamma:=\frac{C_\varepsilon}{(2-\varepsilon) \lambda_1-2 K_2}$.
\end{proof}

\begin{rk}
The result in Proposition \ref{prop-lya} indicates that $V: V_N \to [0,\infty)$ defined by $V(x)=\|x\|^2$, $x \in V_N$, is a Lyapunov function of the DIEG scheme \eqref{die-g}. 
  \end{rk}

  \begin{rk}
  We impose the bounded assumption on the diffusion coefficient in the present white noise case. 
  For an unbounded diffusion coefficient, such stochastic-random transform argument would fail as one could not derive the estimate \eqref{err-ou} for  $W^N_j$ defined in \eqref{ou}.  
  \end{rk}

\subsection{Regularity Property}
\label{sec4}

In this part, we aim to derive the regularity property of the transition kernel $P$ defined in \eqref{markov} associated with the DIEG scheme \eqref{die-g} in the sense that all transition probabilities of \eqref{die-g} are equivalent.

\begin{prop}\label{prop-uni}
      Let Assumptions \ref{ap-f} and \ref{ap-g} hold. 
Then for any $N \in \nn_+$ and $\tau \in (0, 1)$ with $(K_1-\lambda_1) \tau <1$, $P$ is regular. 
Consequently, there exists at most, if it exists, one invariant measure of $\{X_n^N: n \in \nn\}$ in $V_N$. 
  \end{prop}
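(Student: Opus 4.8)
\emph{Proof plan.} The plan is to show that one step of the scheme turns $P(x,\cdot)$ into the image under the bijection $\hat F^{-1}$ of a \emph{nondegenerate} Gaussian measure on $V_N$ whose Lebesgue density is strictly positive; the nondegeneracy will force every $P(x,\cdot)$ to have the same null sets, which is precisely the regularity to be proved.

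First I would rewrite one step of \eqref{die-g}. By \eqref{hatF+} and the bijectivity of $\hat F$ established in Lemma \ref{open-dieg}, conditionally on $X^N_j=x$ we have $X^N_{j+1}=\hat F^{-1}(x+\zeta_x)$, where $\zeta_x:=\PP_N G(x)\,\delta_j W$. Since $\PP_N G(x)$ is finite rank (hence Hilbert--Schmidt) and $\delta_j W$ is cylindrical Gaussian, $\zeta_x$ is a genuine centered Gaussian on $V_N$, so that $P(x,\cdot)$ is the pushforward under $\hat F^{-1}$ of the Gaussian law $\mathcal N(x,Q(x))$ on $V_N$, with $Q(x)$ the covariance of $\zeta_x$.

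The crux is to prove that $Q(x)$ is positive definite for every $x\in V_N$. Writing $\zeta_x=\sum_{n=1}^N \zeta_x^{(n)} e_n$ with $\zeta_x^{(n)}=\<G(x)\delta_j W,e_n\>$, and using that $\{g_k\}$ is an orthonormal basis together with the self-adjointness of multiplication by $g(x(\cdot))$ and that the increments $\delta_j\beta_k$ have variance $\tau$, a direct computation gives
\[
Q(x)_{nm}=\ee\big[\zeta_x^{(n)}\zeta_x^{(m)}\big]=\tau\,\<g(x)e_n,g(x)e_m\>=\tau\int_0^1 g(x(\xi))^2\,e_n(\xi)\,e_m(\xi)\,\dd\xi.
\]
Hence for any $c=(c_1,\dots,c_N)\neq 0$,
\[
c^\tr Q(x)\,c=\tau\int_0^1 g(x(\xi))^2\Big(\sum_{n=1}^N c_n e_n(\xi)\Big)^2\dd\xi>0,
\]
because $g$ never vanishes by \eqref{g-bou}, so $g(x(\cdot))^2>0$ a.e., while $\sum_n c_n e_n$ is a nonzero element of $V_N$ (the $e_n$ being orthonormal) and hence nonzero on a set of positive measure. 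Thus $Q(x)$ is invertible, and $\mathcal N(x,Q(x))$ admits a Lebesgue density $p_x$ that is strictly positive on $V_N\cong\rr^N$.

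Finally I would translate nondegeneracy into regularity. As $\hat F$ is a continuous open bijection (Lemma \ref{open-dieg}), it is a homeomorphism of $V_N$, so $\hat F(A)\in\BB(V_N)$ for every $A\in\BB(V_N)$ and
\[
P(x,A)=\pp\big(x+\zeta_x\in\hat F(A)\big)=\int_{\hat F(A)} p_x(z)\,\dd z.
\]
Since $p_x>0$ everywhere, $P(x,A)=0$ holds iff the Lebesgue measure of $\hat F(A)$ vanishes, a condition independent of $x$; hence $P(x,\cdot)$ and $P(y,\cdot)$ share the same null sets for all $x,y\in V_N$, which is the asserted regularity. For uniqueness, any invariant measure $\mu$ then satisfies $\mu(A)>0\iff\mathrm{Leb}(\hat F(A))>0$ independently of $\mu$, so all invariant measures are mutually equivalent, while distinct ergodic invariant measures are mutually singular; therefore there is at most one invariant measure, by Doob's theorem. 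The main obstacle is the positive definiteness of $Q(x)$: it is exactly here that the invertibility (non-vanishing) hypothesis on $g$ in Assumption \ref{ap-g} is indispensable, guaranteeing that the finite-rank, state-dependent multiplicative noise stays nondegenerate on $V_N$ for every $x$; the remaining steps are bookkeeping built on Lemma \ref{open-dieg}.
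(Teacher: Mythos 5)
Your proposal is correct and follows essentially the same route as the paper: write one step as $X^N_{j+1}=\hat F^{-1}(x+\PP_N G(x)\delta_j W)$, verify via $\<Q(x)v,v\>=\tau\int_0^1 g(x(\xi))^2 v(\xi)^2\,\dd\xi>0$ that the Gaussian is nondegenerate thanks to the non-vanishing of $g$, and combine this with the openness/bijectivity of $\hat F$ from Lemma \ref{open-dieg} to get equivalence of all transition kernels and then uniqueness by Doob. The only cosmetic difference is that you phrase regularity through strictly positive Lebesgue densities while the paper invokes irreducibility plus the mutual equivalence of nondegenerate finite-dimensional Gaussians; these are the same argument.
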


  \begin{proof}
        Let $x \in V_N$ and $A$ be a non-empty Borel open set in $V_N$.
By \eqref{hatF+}, we have 
      \begin{align} \label{pxa+}
          P(x, A) 
          & =\pp(X_{n+1}^N\in A \mid X_n^N=x)
          = \mu_{x, [\PP_N G(x)] [\PP_N G(x)]^\tr \tau}(\hat F(A)),
      \end{align} 
      as $x +\PP_N G(x) \delta_n W \sim N(x, [\PP_N G(x)] [\PP_N G(x)]^\tr \tau)$, 
      where $\mu_{a, b}$ denotes the Gaussian measure in $V_N$ with mean $a \in V_N$ and variance operator $b \in \LL(V_N)$. 
It was shown in Lemma \ref{open-dieg} that $\hat F$ is an open map so that $\hat F(A)$ is a non-empty open set.
Due to the non-degeneracy of $G$ in Assumption \ref{ap-g}, the Gaussian measure $\mu_{x, [\PP_N G(x)] [\PP_N G(x)]^\tr \tau}(\hat F(A))$ is non-degenerate in $\BB(V_N)$.
Indeed, for any $v_N \in V_N \setminus \{0\}$,
\begin{align*}
\<[\PP_N G(x)] [\PP_N G(x)]^\tr v_N, v_N\>
& =\|[\PP_N G(x)]^\tr v_N\|^2
=\sum_{m \in \nn_+} \<[\PP_N G(x)]^\tr v_N, e_m\>^2 \\
& =\sum_{m \in \nn_+} \<v_N, \PP_N G(x) e_m\>^2
=\sum_{m \in \nn_+} \<v_N, G(x) e_m\>^2 \\
% & =\sum_{m \in \nn_+} \Big(\int_0^1 v_N(\xi) g(x(\xi)) e_m(\xi) d\xi\Big)^2  \\
& =\int_0^1 |g(x(\xi))|^2 |v_N(\xi)|^2 d\xi>0,
\end{align*}
as $g$ is invertible in $\rr$. 
It is known that any nondegenerate Gaussian measure in separable Banach space measures any non-empty open set positive, so the open set $\hat F(A) \in \BB(V_N)$ has positive measure under $\mu_{x, [\PP_N G(x)] [\PP_N G(x)]^\tr \tau}$, which implies $P(x, A)>0$ and shows the irreducibility of $\{X_n^N: n \in \nn\}$ in $V_N$.
It is well-known that all non-degenerate Gaussian measures are equivalent in the finite-dimensional case.
Thus, $P$ is regular, and by Doob theorem, it possesses at most one invariant measure. 
  \end{proof}

Now, we can show the unique ergodicity of the DIEG scheme \eqref{die-g}.

  \begin{theorem} \label{tm-spde}
      Let Assumptions \ref{ap-f} and \ref{ap-g} hold with $K_2<\lambda_1$.
Then \eqref{die-g} is uniquely ergodic for any $N \in \nn_+$ and $\tau \in (0, 1)$ with $(K_1-\lambda_1) \tau<1$.
  \end{theorem}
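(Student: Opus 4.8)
The plan is to combine the two structural results already established, namely the Lyapunov structure from Proposition \ref{prop-lya} and the regularity of the transition kernel from Proposition \ref{prop-uni}, through the classical Krylov--Bogoliubov existence argument together with the Doob uniqueness argument. Since Theorem \ref{tm-spde} imposes exactly the hypotheses of both propositions (Assumptions \ref{ap-f}, \ref{ap-g}, the step-size constraint $(K_1-\lambda_1)\tau<1$, and the dissipativity condition $K_2<\lambda_1$), both structural facts are available without further work. Recall that unique ergodicity is precisely the existence of an invariant measure together with its uniqueness, as recorded in Section \ref{sec2.2}.

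First I would establish \emph{existence} of an invariant measure. The standard route is Krylov--Bogoliubov: fix an arbitrary initial point (say $X_0^N=0 \in V_N$, which is $\FFF_0$-measurable and square integrable) and form the sequence of Ces\`aro averages of the laws, $\nu_m:=\frac1m\sum_{n=0}^{m-1} P_n(0,\cdot)$. The Lyapunov bound \eqref{lya} gives $\sup_{j\in\nn}\ee\|X_j^N\|^2 \le \ee\|X_0^N\|^2+C_\gamma<\infty$, so $\sup_m \int_{V_N}\|x\|^2\,\nu_m(\dd x)<\infty$. Because $V_N$ is finite-dimensional, the function $x\mapsto\|x\|^2$ has relatively compact sublevel sets, so this uniform second-moment bound yields tightness of $\{\nu_m\}$ via Markov's inequality. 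By Prokhorov's theorem a subsequence converges weakly to some probability measure $\mu$ on $V_N$, and the Feller property of $P$ (which follows from the continuity of $\hat F^{-1}$, $G$, and the Gaussian structure in \eqref{pxa+}) makes $\mu$ invariant in the usual way.

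Next, \emph{uniqueness}: Proposition \ref{prop-uni} already states that under Assumptions \ref{ap-f} and \ref{ap-g} the kernel $P$ is regular, so by Doob's theorem there is \textbf{at most} one invariant measure. Combining this with the existence just obtained, $\{X_n^N:n\in\nn\}$ admits exactly one invariant measure $\mu$, and therefore, by the remark closing Section \ref{sec2.2}, it is uniquely ergodic in the sense of \eqref{df-erg}.

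The main obstacle I anticipate is ensuring that the Krylov--Bogoliubov machinery is cleanly justified rather than invoked as a black box; in particular one must confirm the Feller property of $P$ and check that the weak limit of the Ces\`aro averages is genuinely invariant, both of which rely on the explicit transition structure \eqref{pxa+} and the continuity of the coefficients. Everything else — tightness, Prokhorov extraction, and the appeal to Doob — is routine once the uniform moment estimate \eqref{lya} and regularity from Proposition \ref{prop-uni} are in hand, so the proof should amount to little more than assembling these two ingredients.
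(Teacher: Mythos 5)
Your proposal is correct and follows essentially the same route as the paper: the paper's proof likewise combines the Lyapunov estimate \eqref{lya} (for existence, via the Feller property and the implicit Krylov--Bogoliubov argument) with the regularity of $P$ from Proposition \ref{prop-uni} (for uniqueness via Doob). You merely spell out the Krylov--Bogoliubov tightness and Prokhorov extraction steps that the paper leaves implicit.
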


  \begin{proof} 
The Lyapunov condition \eqref{lya} in Theorem \ref{prop-lya}, together with the Feller property followed from the regularity property in Proposition \ref{prop-uni}, imply the existence of an invariant measure for $\{X_n^N: n \in \nn\}$.
Combined with the regularity property in Proposition \ref{prop-uni}, we conclude the uniqueness of the invariant measure for $\{X_n^N: n \in \nn\}$.
  \end{proof}

Applying the above result of Theorem \ref{tm-spde}, we have the following unique ergodicity of the DIEG scheme \eqref{die-g} applied to the stochastic Allen--Cahn equation \eqref{ac}.

  \begin{theorem} \label{tm-ac}
  Let Assumption \ref{ap-g} hold.
  For any $\epsilon>0$, $N \in \nn_+$ and $\tau \in (0, 1)$ with $(\epsilon^{-2}-\lambda_1) \tau<1$, \eqref{die-g} applied to Eq. \eqref{ac} is uniquely ergodic.
\qed 
  \end{theorem}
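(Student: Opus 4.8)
The plan is to reduce Theorem \ref{tm-ac} to the general statement of Theorem \ref{tm-spde} by verifying that the Allen--Cahn nonlinearity $f(\xi)=\epsilon^{-2}(\xi-\xi^3)$ satisfies Assumption \ref{ap-f} with constants compatible with the two structural hypotheses $K_2<\lambda_1$ and $(K_1-\lambda_1)\tau<1$. Since Assumption \ref{ap-g} is imposed directly in the statement, the only real work is to pin down admissible values of $K_1$, $K_2$, and $q$ for this specific $f$.

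First I would check the one-sided Lipschitz bound \eqref{f-mon}. Factoring $\xi^3-\eta^3=(\xi-\eta)(\xi^2+\xi\eta+\eta^2)$ and using $\xi^2+\xi\eta+\eta^2=(\xi+\eta/2)^2+3\eta^2/4\ge 0$ gives
\begin{align*}
(f(\xi)-f(\eta))(\xi-\eta)=\epsilon^{-2}(\xi-\eta)^2\,[1-(\xi^2+\xi\eta+\eta^2)]\le \epsilon^{-2}(\xi-\eta)^2,
\end{align*}
so \eqref{f-mon} holds with $K_1=\epsilon^{-2}$. This is precisely the constant that turns the scheme-stability condition $(K_1-\lambda_1)\tau<1$ of Theorem \ref{tm-spde} into the assumed condition $(\epsilon^{-2}-\lambda_1)\tau<1$.

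The main point, and the only place where a naive estimate would fail, is the coercivity \eqref{f-coe}. Writing $f(\xi)\xi=\epsilon^{-2}(\xi^2-\xi^4)$ and reading the right-hand side as $\epsilon^{-2}(u-u^2)$ with $u=\xi^2\ge 0$, I would use that this quadratic in $u$ is bounded above by its maximal value $\epsilon^{-2}/4$, attained at $u=1/2$. Hence
\begin{align*}
f(\xi)\xi\le 0\cdot|\xi|^2+\frac{\epsilon^{-2}}{4},\quad \xi\in\rr,
\end{align*}
i.e. \eqref{f-coe} holds with $K_2=0$ and $K_3=\epsilon^{-2}/4$. Crucially, exploiting the quartic dissipation lets one take $K_2=0$; a crude bound discarding the $-\xi^4$ term would only give $K_2=\epsilon^{-2}$, which violates $K_2<\lambda_1=\pi^2$ once $\epsilon$ is small. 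With $K_2=0<\lambda_1$, the requirement $K_2<\lambda_1$ holds for \emph{every} $\epsilon>0$, which is exactly what yields ergodicity at arbitrary interface thickness. The polynomial growth \eqref{f-gro} is then immediate from $|f(\xi)|\le \epsilon^{-2}(|\xi|+|\xi|^3)$, giving $q=3$ together with suitable $K_4,K_5$; the precise value of $q$ plays no further role beyond $q\ge 1$.

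Having verified Assumption \ref{ap-f} with $K_1=\epsilon^{-2}$ and $K_2=0<\lambda_1$, and with Assumption \ref{ap-g} in force by hypothesis, all conditions of Theorem \ref{tm-spde} are met under the stated restriction $(\epsilon^{-2}-\lambda_1)\tau<1$. Applying that theorem directly gives the unique ergodicity of \eqref{die-g} for Eq. \eqref{ac}, completing the argument. I expect no genuine obstacle: the result is a specialization of Theorem \ref{tm-spde}, and the only subtlety is the choice $K_2=0$ described above, which records the dissipativity hidden in the cubic term.
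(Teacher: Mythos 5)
Your proposal is correct and follows essentially the same route as the paper: both reduce the theorem to Theorem \ref{tm-spde} by verifying Assumption \ref{ap-f} for $f(\xi)=\epsilon^{-2}(\xi-\xi^3)$ with $K_1=\epsilon^{-2}$ and a coercivity constant $K_2<\lambda_1$ independent of $\epsilon$ (the paper cites prior work for these verifications and takes $K_2$ negative, while you carry out the elementary computations explicitly and take $K_2=0$, $K_3=\epsilon^{-2}/4$). Your identification of the quartic dissipation as the reason the result holds for every interface thickness is exactly the point the paper is making.
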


  \begin{proof}  
  We just need to check that the conditions in Assumption \ref{ap-f} hold with $K_2<\lambda_1$ in the setting of Eq. \eqref{ac} with $q=3$, $f(\xi)=\epsilon^{-2}(\xi-\xi^3)$, $\xi \in \rr$, and $g$ satisfying Assumption \ref{ap-g}.
   
  The validity of \eqref{f-mon} and \eqref{f-gro} in Assumption \ref{ap-f} in the setting of Eq. \eqref{ac} were shown in \cite[Section 4]{Liu23}: $K_1=\epsilon^{-2}$, $K_4=2\epsilon^{-2}$, and $K_5=\epsilon^{-2}$. 
Morevoer, one can take $K_2$ to be any negative scalar and thus \eqref{f-coe} with $K_2<\lambda_1$ and some $K_3>0$; see \cite[Theorem 4.4]{LL24}.
  \end{proof}

\section{Numerical Experiments}
\label{sec4}

  In this section, we perform some numerical experiments to verify our theoretical result, Theorem \ref{tm-ac} in Section \ref{sec3}.
 
The numerical test is given to the stochastic Allen--Cahn equation \eqref{ac} in $\OOO=(0, 1)$ with $\epsilon=0.5$ and $g(x)=2+\sin x^2$.
By Theorem \ref{tm-ac}, the DIEG scheme \eqref{die-g} applied to Eq. \eqref{ac} is uniquely ergodic for any $\tau \in (0, 1)$ (fulfilling the conditon $(\epsilon^{-2}-\lambda_1) \tau<1$). 
We take $\tau=0.05$ and $N=10$ (the dimension of the spectral Galerkin approximate space), choose three initial data $X_0(\xi)=\sin \pi \xi, \sum_{k=1}^{10} \sin k \pi \xi, -\sum_{k=1}^{10} \sin k \pi \xi$, $\xi \in (0, 1)$, respectively, and approximate the expectation by taking averaged value over $1,000$ paths to implement the numerical experiments.
In addition, we simulate the time averages $\frac{1}{2,000}\sum_{n=1}^{2,000} \ee [\phi(X_n^N)]$ (up to $n=2,000$ corresponding to $t=100$) by 
  \begin{align*}
    \frac{1}{2,000,000}\sum_{n=1}^{2,000} \sum_{k=1}^{1,000} \phi(X_n^{N, k}),
  \end{align*}
  where $X_n^{N, k}$ denotes $n$-th iteration of $k$-th sample path and the test function $\phi$ are chosen to be $\phi(\cdot)=e^{-\|\cdot\|^2}, \sin \|\cdot\|^2, \|\cdot\|^2$, respectively.

  \begin{figure}[h]
    \centering
    \includegraphics[width=1\textwidth]{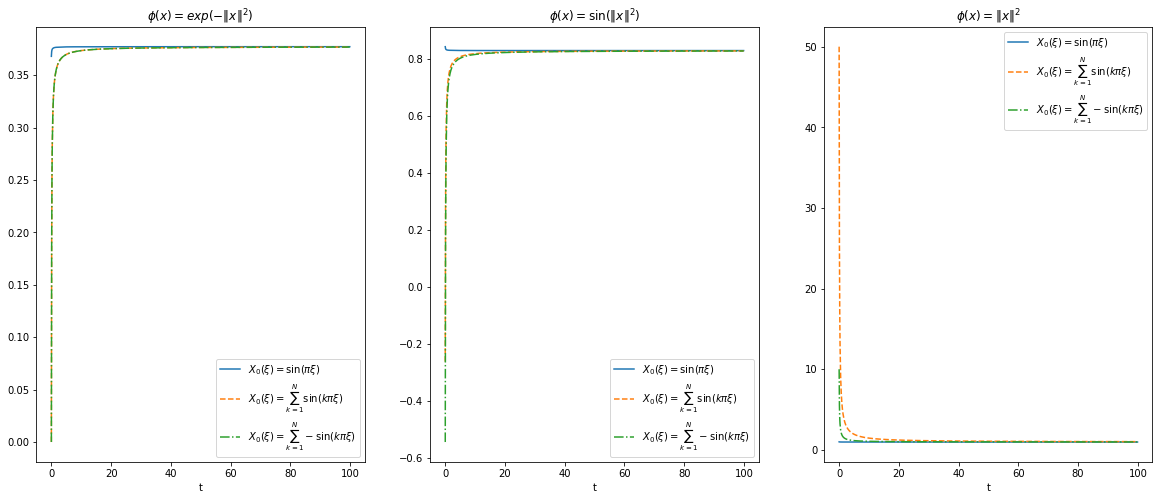} 
    \caption{Time averages of \eqref{die-g} for Eq. \eqref{ac}}\label{fig-ac}
  \end{figure}

From Figure \ref{fig-ac}, the time averages of the DIEG scheme \eqref{die-g} with different initial data converge to the same ergodic limit, which coincides with the theoretical result in Theorem \ref{tm-ac}.
This experiment also indicates that the original 1D stochastic Allen--Cahn equation \eqref{ac}, with bounded, invertible, and continuous diffusion coefficient, driven by multiplicative white noise, is uniquely ergodic, which will be investigated in a separate paper.

  \bibliographystyle{plain}
  \bibliography{bib.bib}

\begin{thebibliography}{10}

\bibitem{BGJK23}
S.~Becker, B.~Gess, A.~Jentzen, and P.~E. Kloeden.
\newblock Strong convergence rates for explicit space-time discrete numerical
  approximations of stochastic {A}llen-{C}ahn equations.
\newblock {\em Stoch. Partial Differ. Equ. Anal. Comput.}, 11(1):211--268,
  2023.

\bibitem{Bre14}
C.-E. Br{\'e}hier.
\newblock Approximation of the invariant measure with an {E}uler scheme for
  stochastic {PDE}s driven by space-time white noise.
\newblock {\em Potential Anal.}, 40(1):1--40, 2014.

\bibitem{BCH19}
C.-E. Br\'ehier, J.~Cui, and J.~Hong.
\newblock Strong convergence rates of semidiscrete splitting approximations for
  the stochastic {A}llen-{C}ahn equation.
\newblock {\em IMA J. Numer. Anal.}, 39(4):2096--2134, 2019.

\bibitem{BV16}
C.-E. Br\'{e}hier and G.~Vilmart.
\newblock High order integrator for sampling the invariant distribution of a
  class of parabolic stochastic {PDE}s with additive space-time noise.
\newblock {\em SIAM J. Sci. Comput.}, 38(4):A2283--A2306, 2016.

\bibitem{CGW20}
Z.~Chen, S.~Gan, and X.~Wang.
\newblock A full-discrete exponential {E}uler approximation of the invariant
  measure for parabolic stochastic partial differential equations.
\newblock {\em Appl. Numer. Math.}, 157:135--158, 2020.

\bibitem{CHS21}
J.~Cui, J.~Hong, and L.~Sun.
\newblock Weak convergence and invariant measure of a full discretization for
  parabolic {SPDE}s with non-globally {L}ipschitz coefficients.
\newblock {\em Stochastic Process. Appl.}, 134:55--93, 2021.

\bibitem{DZ96}
G.~Da~Prato and J.~Zabczyk.
\newblock {\em Ergodicity for infinite-dimensional systems}, volume 229 of {\em
  London Mathematical Society Lecture Note Series}.
\newblock Cambridge University Press, Cambridge, 1996.

\bibitem{HM06}
M.~Hairer and J.~C. Mattingly.
\newblock Ergodicity of the 2{D} {N}avier-{S}tokes equations with degenerate
  stochastic forcing.
\newblock {\em Ann. Math. (2)}, 164(3):993--1032, 2006.

\bibitem{HW19}
J.~Hong and X.~Wang.
\newblock {\em Invariant measures for stochastic nonlinear {S}chr\"{o}dinger
  equations}, volume 2251 of {\em Lecture Notes in Mathematics}.
\newblock Springer, Singapore, 2019.

\bibitem{LMYY18}
X.~Li, Q.~Ma, H.~Yang, and C.~Yuan.
\newblock The numerical invariant measure of stochastic differential equations
  with {M}arkovian switching.
\newblock {\em SIAM J. Numer. Anal.}, 56(3):1435--1455, 2018.

\bibitem{LMW23}
W.~Liu, X.~Mao, and Y.~Wu.
\newblock The backward {E}uler-{M}aruyama method for invariant measures of
  stochastic differential equations with super-linear coefficients.
\newblock {\em Appl. Numer. Math.}, 184:137--150, 2023.

\bibitem{LR15}
W.~Liu and M.~R\"ockner.
\newblock {\em Stochastic partial differential equations: an introduction}.
\newblock Universitext. Springer, Cham, 2015.

\bibitem{Liu22}
Z.~Liu.
\newblock {$L^p$}-convergence rate of backward {E}uler schemes for monotone
  {SDE}s.
\newblock {\em BIT}, 62(4):1573--1590, 2022.

\bibitem{Liu23}
Z.~Liu.
\newblock Strong approximation of monotone {SPDE}s driven by multiplicative
  noise: exponential ergodicity and uniform estimates.
\newblock {\em to appear at J. Comput. Math. (doi:10.4208/jcm.2409-m2024-0041,
  arXiv:2305.06070)}, 2024.

\bibitem{LL24}
Z.~Liu and Z.~Liu.
\newblock Unique ergodicity of stochastic theta method for monotone {SDE}s
  driven by nondegenerate multiplicative noise.
\newblock {\em arXiv:2401.01112}.

\bibitem{LQ20}
Z.~Liu and Z.~Qiao.
\newblock Strong approximation of monotone stochastic partial differential
  equations driven by white noise.
\newblock {\em IMA J. Numer. Anal.}, 40(2):1074--1093, 2020.

\bibitem{LQ21}
Z.~Liu and Z.~Qiao.
\newblock Strong approximation of monotone stochastic partial differential
  equations driven by multiplicative noise.
\newblock {\em Stoch. Partial Differ. Equ. Anal. Comput.}, 9(3):559--602, 2021.

\bibitem{MSH02}
J.~C. Mattingly, A.~M. Stuart, and D.~J. Higham.
\newblock Ergodicity for {SDE}s and approximations: locally {L}ipschitz vector
  fields and degenerate noise.
\newblock {\em Stochastic Process. Appl.}, 101(2):185--232, 2002.

\bibitem{SH96}
A.~M. Stuart and A.~R. Humphries.
\newblock {\em Dynamical systems and numerical analysis}, volume~2 of {\em
  Cambridge Monographs on Applied and Computational Mathematics}.
\newblock Cambridge University Press, Cambridge, 1996.

\end{thebibliography}

\end{document}